\pgfplotsset{compat=newest}
\tikzset{new spy style/.style={
  spy scope={
    magnification=2,
    size=2cm,
    connect spies,
    every spy on node/.style={rectangle, draw, thick},
    every spy in node/.style={draw, rectangle, fill=none}
  }
}}
\definecolor{matlabblue}{rgb}{0, 0.4470, 0.7410}
\definecolor{matlabgreen}{rgb}{0.1330, 0.5450, 0.1330}
\definecolor{matlaborange}{rgb}{0.9290, 0.6940, 0.1250}
\definecolor{matlabpurple}{rgb}{0.4940, 0.1840, 0.5560}
\definecolor{matlabblack}{rgb}{0,0,0}
\newtheorem{definition}{Definition}
\newtheorem{remark}{Remark}
\newtheorem{lemma}{Lemma}
\newtheorem{theorem}{Theorem}
  \newenvironment{proof}[1][Proof]{\par\noindent\textbf{#1. }}{\hfill$\square$\par}
\newcommand{\proj}{\text{proj}}
\newcommand{\TOC}[1]{{\color{black}#1}}
\newcommand{\B}[1]{{\color{black}#1}}
\begin{document}
\begin{frontmatter}

\title{Safe Adaptive Feedback Control via Barrier States\thanksref{footnoteinfo}} 

\thanks[footnoteinfo]{This research was supported in part by the Air Force Research Laboratory under the contract number FA8651-24-1-0019. Any opinions, findings, or recommendations in this article are those of the authors and do not necessarily reflect the views of the sponsoring agencies.}


\author[First]{Trivikram Satharasi}
\author[First]{Tochukwu E. Ogri}
\author[First]{Muzaffar Qureshi} 
\author[Second]{Kyle Volle}
\author[First]{Rushikesh Kamalapurkar}

\address[First]{University of Florida, Mechanical and Aerospace Engineering, Gainesville, FL 32608 USA \tt\footnotesize{(e-mail: t.satharasi@ufl.edu, tochukwu.ogri@ufl.edu, muzaffar.qureshi@ufl.edu, rkamalapurkar@ufl.edu.})}
\address[Second]{Torch Technologies, Shalimar, FL, USA (email: {\tt\footnotesize Kyle.Volle@torchtechnologies.com.})}

\begin{abstract}                
This paper presents a safe feedback control framework for nonlinear control-affine systems \TOC{with parametric uncertainty} by leveraging adaptive dynamic programming (ADP) with barrier-state augmentation. The developed ADP-based controller enforces control invariance by optimizing a value function that explicitly penalizes the barrier state, thereby embedding safety directly into the Bellman structure. The near-optimal control policy computed using model-based reinforcement learning is combined with a concurrent learning estimator to identify the unknown parameters and guarantee uniform convergence without requiring persistency of excitation. Using a barrier-state Lyapunov function, we establish boundedness of the barrier dynamics and prove closed-loop stability and safety. Numerical simulations on an optimal obstacle-avoidance problem validate the effectiveness of the developed approach.
\end{abstract}

\begin{keyword}
Safe optimal control; Adaptive dynamic programming; Barrier states.
\end{keyword}

\end{frontmatter}

\section{Introduction}
As autonomous systems become more common, safety is one of the considerations that affects their real-world deployment. For example, systems such as aircraft guidance and navigation, autonomous self-driving vehicles, and nuclear reactors have safety as one of their main design goals. Safety requirements have motivated research focused on understanding and designing controllers that guarantee safety under \TOC{uncertainty}. In control systems, the notion of safety is regarded as staying within the regions or states of favorable conditions and avoiding \TOC{unfavorable} regions and states. This notion was further formalized by \cite{SCC.Nagumo.ea1942, SCC.BLANCHINI.ea1999} \TOC{via} forward \TOC{invariance of sets with respect to} the flow of the dynamics.



Over the past decade, control barrier functions (CBFs) have become a popular tool for enforcing safety by guaranteeing forward invariance of prescribed sets, see \cite{SCC.Wieland.Allgoewer2007, SCC.Ames.Grizzle.ea2014}. 
While initial work on CBFs was model-based
recent research has focused on extending CBF-based feedback control to ensure safety under model uncertainty (\TOC{\cite{ SCC.Mahmud.Nivison.ea2021, SCC.Robey.Lindemann.ea2021, SCC.Lopez.Slotine.ea2020, SCC.Ogri.Qureshi.ea2025b}}). 

To guarantee safety, CBF-based reinforcement learning (RL) methods such as \cite{SCC.Cohen.Belta.ea2020, SCC.Deptula.Chen.ea2020, SCC.Marvi.Kiumarsi.ea2020} integrate model-free policy learning with model-based safety filtering by incorporating a CBF in the cost function of the optimal control problem, enforcing forward invariance of a given safe set during execution. However, the CBF-based RL approaches in \cite{SCC.Cohen.Belta.ea2020, SCC.Deptula.Chen.ea2020, SCC.Marvi.Kiumarsi.ea2020} can lead to a trade-off between safety and performance, with overly restrictive safety conditions hindering the ability of the system to achieve its stability objectives. To avoid this trade-off, \cite{SCC.Cohen.Belta.ea2023} and \cite{SCC.Cohen.Serlin.ea2023} propose a decoupled safety-aware controller by developing a dedicated safety controller which is paired with a \TOC{model-based reinforcement learning} (MBRL) controller, allowing the system to maintain safety without compromising stability (\cite{SCC.Kamalapurkar.Rosenfeld.ea2016}). However, the results in \cite{SCC.Cohen.Belta.ea2023} rely on an $L_{\infty}$ bound on the gradient of the Lyapunov-like barrier function to guarantee stability, which may be difficult to obtain near the boundary of the safe set. While \cite{SCC.Cohen.Serlin.ea2023} relaxes this bound by using zeroing barrier functions, the controllers in \cite{SCC.Cohen.Belta.ea2023} and \cite{SCC.Cohen.Serlin.ea2023} are performance-driven but not optimal. Furthermore, RL QP-based controllers, such as in \cite{SCC.Cohen.Serlin.ea2023,krstic2023inverse}, ensure safety by solving a quadratic program at each step, but this safety enforcement can modify the control input in ways that deviate from the original MBRL policy, potentially compromising the stability properties that the MBRL controller would otherwise guarantee. Recent developments have further shown that CBF–based quadratic programs can introduce undesirable asymptotically stable equilibria (\cite{Reis.Anguiar2021}), which may lead the closed-loop system to converge to an unintended trajectory or equilibrium point rather than the one prescribed by the learning-based controller.

\TOC{Another popular approach to safe control is the barrier transformation (BT) method (\cite{SCC.Yang.Vamvoudakis.ea2019, SCC.Greene.Deptula.ea2020, SCC.Mahmud.Nivison.ea2021}), which applies nonlinear coordinate transformations to system states. Unlike CBF-based techniques, the BT approach in \cite{SCC.Yang.Vamvoudakis.ea2019} reformulate the constrained state-feedback control problem into an equivalent unconstrained problem through a state transformation, enabling the direct design of controllers that inherently satisfy state constraints. 
Subsequent extensions of the method have relaxed restrictive assumptions such as persistence of excitation \cite{SCC.Greene.Deptula.ea2020} and have incorporated learning-based techniques to handle parametric uncertainty (\cite{SCC.Mahmud.Nivison.ea2021}). Although BT-based approaches yield verifiable safe state-feedback controllers, they are limited to box constraints.
}

To address these limitations, \cite{SCC.Almubarak.Sadegh.ea2025} introduced a barrier operator that constructs an intrinsic safety coordinate, termed the barrier state (BaS), which transforms the original system into an extended model in which safety is encoded as \B{boundedness of a part of} the state, yielding a forward-invariant safety cone in the augmented dynamics. This approach completely bypasses the requirement to synthesize a valid CBF, a key limitation of traditional CBFs. BaS is directly integrated into the system dynamics, ensuring that control decisions are made keeping current and the immediate future safety in consideration. \TOC{Unlike BT-based techniques, BaS-based safe control applies to a more general class of constraints.}

BaS appear directly in the system dynamics, enabling the design of safety-certified controllers via standard nonlinear control tools. A fundamental limitation of both BaS and CBF approaches is their dependence on complete knowledge of the system dynamics, which is rarely available in practice. While CBF extensions for uncertain systems (\cite{SCC.Taylor.Ames.ea2020, SCC.Lopez.Slotine.ea2020}) provide robustness, they often impose conservative control bounds that limit performance. Integrating BaS with parameter estimation (\cite{Aoun.Zhao, Sunni.Alumubarak}) uses adaptive techniques to maintain safety under uncertainty; however, approaches such as \cite{Sunni.Alumubarak} does not guarantee convergence of parameter estimates to their true values, leaving a gap between robust safety and accurate system identification.

\TOC{In this paper, we propose a safe feedback control framework for nonlinear affine systems with unknown parameters by combining integral concurrent learning (ICL), adaptive dynamic programming (ADP), and BaS augmentation. The main contributions of this work are as follows. First, by embedding the safety constraint into an augmented system state, the BaS-RL framework ensures that the controller maintains near-optimal safety even when the system parameters are unknown. 
Second, by integrating an actor-critic ADP framework along with an ICL-based parameter estimator, our approach learns the optimal value function and generates a control policy that balances safety, stability, and performance. Through a Lyapunov-based analysis, we show that the developed ICL-BaS-RL architecture simultaneously achieves the desired safety, stability, and performance objectives, marking a significant advancement over existing safe optimal control methods.
}
 
\section{Problem Formulation}	
Considers a nonlinear control-affine system of the form 
\begin{equation}
\label{eq:dynamics}
\dot{x} = Y(x)\theta + f(x) + g(x)u, \quad x(0) = x_{0}, 
\end{equation}
where $x \in \mathbb{R}^{n}$ is system state vector, $u \in \mathbb{R}^{m}$ is the control input, $\theta \in \mathbb{R}^{p}$ is a vector of unknown parameters, and $Y: \mathbb{R}^n \to \mathbb{R}^{n \times p} $, $f: \mathbb{R}^{n} \rightarrow \mathbb{R}^{n}$, and $g: \mathbb{R}^{n} \rightarrow \mathbb{R}^{n \times m}$ are the known system regressor matrix, drift dynamics, and control effectiveness matrix respectively.
\begin{assum}\label{ass:locallylipschitzfunctions}
The functions $f$,  $g$, and $Y$ are locally Lipschitz continuous and satisfy $f(0) = 0$, $Y(0) = 0$, and $0 < \|g(x)\| \leq \overline{G}$ for some $\overline{G} > 0$ and for all $x \in \mathbb{R}^{n}$.
\end{assum}
\begin{assum}\label{ass:thetacompactset}
\B{There exists a known $\overline{\theta} > 0$ such that $||\theta||\leq \overline{\theta}$.}
\end{assum}
\B{To facilitate the design, define} the projection operator
\begingroup\medmuskip=0mu\begin{equation}\label{eq:projectionOperator}
\operatorname{proj}_{\Theta}(\mu,v)
\coloneqq
\begin{cases}
v, & \text{if } \mu \in \operatorname{int}(\Theta),\\[4pt]
\left(I_p - \dfrac{\Gamma\mu\mu^\top}{\mu^\top \Gamma \mu}\right)v,
& \text{if } \mu \in \partial\Theta \text{ and } \mu^\top v > 0,\\[6pt]
v, & \text{otherwise\B{,}}
\end{cases}
\end{equation}\endgroup
where \B{$
    \Theta \coloneqq \big\{\, \theta \in \mathbb{R}^{p} \;\big|\; \|\theta\| \le \overline{\theta} \,\big\},
$} $I_p$ is the $p\times p$ identity matrix, $\Gamma\in\mathbb{R}^{p\times p}$ is a symmetric positive definite adaptation-gain matrix, and $\mu,v\in\mathbb{R}^p$.

The projection operator in \eqref{eq:projectionOperator} is the same as the projection operator introduced in \cite{SCC.Cai.Queiroz.ea2006} and in Appendix E of \cite{SCC.Krstic.Kanellakopoulos.ea1995}.


To define safety, let $\mathcal{S} \subset \mathbb{R}^{n}$ represent the zero super-level set of a continuously differentiable function $h \in \mathcal{C}^{1}(\mathbb{R}^{n};\mathbb{R})$, defined as
\begin{align}
    \mathcal{S} &= \{x \in \mathbb{R}^{n} \mid h(x)\geq 0\}, \label{eq:safeSet1}\\
    \partial\mathcal{S} &= \{x \in \mathbb{R}^{n} \mid h(x) = 0\}, \\
    \operatorname{int}(\mathcal{S}) &= \{x \in \mathbb{R}^{n} \mid h(x) > 0\}, \label{eq:safeSet3}
\end{align}
where $\partial\mathcal{S}$ and $\operatorname{int}(\mathcal{S})$ denote the boundary and interior of $\mathcal{S}$, respectively. 
\begin{assum}

\B{The set $\mathcal{S}$ is nonempty, contains no isolated points, and $\nabla h(x) \neq 0, \forall x \in \partial\mathcal{S} $. }
\end{assum}

Let $t \mapsto x(t)$ denote the trajectory of \eqref{eq:dynamics} starting from $x_{0}$ under the input $u(\cdot)$. The notion of safety adopted in this work is based on conditional control invariance as formalized by the following definition.

\begin{definition}\label{defn:safety}
A set $\mathcal{S}\subset\mathbb{R}^{n}$ is said to be conditionally control invariant \B{with respect to $\mathcal{S}_0\subseteq \mathcal{S}$} if there exist\B{s} an admissible control policy $u(\cdot) \in\mathcal{U}$ such that, for every initial condition $x_{0}\in \mathcal{S}_{0}$, the corresponding trajectory satisfies $x(t) \in \mathcal{S}, \, \forall\, t\ge 0$.
\end{definition}



The objective of this paper is to estimate the unknown parameters of the system \eqref{eq:dynamics} while simultaneously synthesizing a safe feedback control policy $\pi$ that ensures \TOC{conditional} control invariance as described in Definition~\ref{defn:safety}.

\section{Safety-Embedded Model}

To integrate constraint-awareness directly into the system model in a manner that preserves differentiability, supports observer and estimator design, and admits Lyapunov-based guarantees, consider a barrier operator \B{$B \in \mathcal{C}^{\infty}\left((0, \infty); \mathbb{R}\right)$ such that} $\frac{dB(a)}{da} < 0 \;\; \forall a>0$,  $\lim_{a\to 0^+} B(a) = \infty$,
\B{and} $\frac{dB(a)}{da} \circ B^{-1} \in \mathcal{C}^{\infty}\left((0, \infty); \mathbb{R}\right)$.  
For \B{the} continuously differentiable function $h \in \mathcal{C}^{1}(\mathbb{R}^{n};\mathbb{R})$ \B{that defines $\mathcal{S}$}, define the composed barrier function
\begin{equation}\label{eq:barrierFunction}
\beta(x) \coloneqq B\bigl(h(x)\bigr).
\end{equation}  
Because $B$ diverges as $h(x)\to 0^+$,  $\beta$ satisfies
\begin{equation}
\beta(x) \ge 0, \;\; \forall x \in \operatorname{int}(\mathcal{S}), \qquad
\lim_{x \to \partial \mathcal{S}} \beta(x) = \infty,
\end{equation}
so the safety condition $h(x)>0$ becomes equivalent to $\beta(x)<\infty$. The following lemma is classical; however, we repeat it for completeness.

\begin{lemma}\label{lem:safety}(\cite{SCC.Almubarak.Sadegh.ea2025})
Let $\mathcal{S}\subset\mathbb{R}^{n}$ be the safe set defined in~\eqref{eq:safeSet1}, and let 
$\beta:\mathbb{R}^{n}\to[0,\infty]$ be the \B{composed} barrier function associated with $\mathcal{S}$ as specified in~\eqref{eq:barrierFunction}.  \TOC{Then for every $x_0\in\mathcal S_0$, the admissible control policy $u(\cdot) \in \mathcal{U}$ is safe, i.e.,} $\mathcal{S}$ is conditionally control invariant with respect to $\mathcal{S}_{0}\subseteq\mathcal{S}$, \TOC{if and only if} $\beta(x)<\infty$  \TOC{$\forall x\in\mathcal{S}$}.
\end{lemma}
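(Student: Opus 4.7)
The plan is to reduce the biconditional to a pointwise topological equivalence supplied by the defining properties of the barrier operator $B$. Since $B \in \mathcal{C}^{\infty}((0,\infty);\mathbb{R})$ with $\lim_{a\to 0^+} B(a)=\infty$, the composed function $\beta(x) = B(h(x))$ satisfies $\beta(x) < \infty$ exactly when $h(x) > 0$, i.e., exactly when $x \in \operatorname{int}(\mathcal{S})$. This pointwise equivalence does the heavy lifting in both directions of the biconditional.

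For the sufficiency direction, assuming $\beta(x(t)) < \infty$ along the closed-loop trajectory for all $t \ge 0$ immediately yields $h(x(t)) > 0$, hence $x(t) \in \operatorname{int}(\mathcal{S}) \subseteq \mathcal{S}$, which is exactly the conditional control invariance property of Definition~\ref{defn:safety}. For the necessity direction, I would fix $x_0 \in \mathcal{S}_0$ with $\beta(x_0) < \infty$, assume $x(t)\in\mathcal{S}$ for all $t\ge 0$, and argue by contradiction: if $\beta(x(t^\star))=\infty$ at some first time $t^\star$, continuity of $t \mapsto h(x(t))$ together with the divergence of $B$ at $0^+$ forces $h(x(t^\star))=0$, placing $x(t^\star)$ on $\partial\mathcal{S}$. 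Using the non-vanishing of $\nabla h$ on $\partial\mathcal{S}$ and a short-time Taylor expansion of $h(x(t))$ about $t^\star$, I would show that the trajectory must leave $\mathcal{S}$ immediately after $t^\star$, contradicting invariance.

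The principal obstacle is the boundary behavior: Definition~\ref{defn:safety} permits $x(t) \in \partial\mathcal{S}$ while $\beta$ diverges there, so the equivalence as stated really requires that safe trajectories remain in $\operatorname{int}(\mathcal{S})$. Handling this cleanly likely requires restricting $\mathcal{S}_0 \subseteq \operatorname{int}(\mathcal{S})$ at initialization, or interpreting safety as strict safety (trajectory in $\operatorname{int}(\mathcal{S})$); either way, the short-time no-escape argument sketched above is what rules out grazing trajectories that touch $\partial\mathcal{S}$ without leaving, and is the one place in the proof where the regularity of $h$ is genuinely used.
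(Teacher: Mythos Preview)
The paper does not supply its own proof of this lemma; it is stated with a citation to \cite{SCC.Almubarak.Sadegh.ea2025} and used without further argument. The only accompanying remark is the sentence immediately after the statement, which invokes only the sufficiency direction (``as long as $\beta(x(t))$ remains finite, the state remains safely within~$\mathcal{S}$''). Your pointwise equivalence $\beta(x)<\infty \iff h(x)>0 \iff x\in\operatorname{int}(\mathcal{S})$ delivers that direction immediately and matches what the paper actually needs.

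For the converse, your contradiction argument has a real gap. Arriving at $x(t^\star)\in\partial\mathcal{S}$ does \emph{not} force the trajectory to exit $\mathcal{S}$: Nagumo's condition characterizes precisely when a trajectory can graze $\partial\mathcal{S}$ and remain, and $\nabla h(x(t^\star))\neq 0$ by itself does not preclude this---it only says $\partial\mathcal{S}$ is locally a smooth hypersurface. The Taylor expansion you propose has leading term $\nabla h(x(t^\star))^\top\dot{x}(t^\star)$, which may be zero or positive, so no contradiction follows in general. You correctly identify the boundary case as the crux, but the fix you offer (restrict $\mathcal{S}_0\subseteq\operatorname{int}(\mathcal{S})$ and read safety as strict safety) is not a lemma to be combined with the no-escape argument---it \emph{replaces} it, reducing the biconditional to the tautological pointwise equivalence with no dynamics involved. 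That is almost certainly the intended content of the cited result; drop the Taylor/no-escape step rather than presenting it as doing work it cannot do.
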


Lemma~\ref{lem:safety} motivates the safe control strategy to embed safety into system dynamics, since as long as $\beta(x(t))$ remains finite, the state remains safely within~$\mathcal{S}$. 

\B{U}sing a coordinate transformation\TOC{,} the BaS \TOC{is defined as} $z \coloneqq \beta(x) - \beta_0$ with $\beta_0 \coloneqq \beta(0)$. The dynamics in these coordinates become
\begin{equation}\label{eq:ZDynamics}
\dot{z} = \Phi(z + \beta_0) \, \nabla h(x) \Big( Y(x)\theta + f(x) + g(x) u \Big).
\end{equation}
\B{where $\Phi = \frac{B(h)}{dh} \circ B^{-1} \in \mathcal{C}^{\infty}\left((0, \infty); \mathbb{R}\right)$ appears as a nonlinear gain induced by the barrier operator, and the initial BaS is $z_0= \beta(x_0)-\beta_0$.}

The BaS induces a forward-invariant cone in the extended $(x,z)$-space that encodes safety as boundedness of $z$.
\begin{lemma}
\label{lem:cone}
\TOC{Define} the set $\mathcal{C} \coloneqq \{(x,z)\in\mathbb{R}^{n}\times\mathbb{R} \mid z = \beta(x)-\beta_0\}$.
\TOC{Consider any solution $(x(t),z(t))$ of \eqref{eq:dynamics} and \eqref{eq:ZDynamics} under a locally Lipschitz feedback $u(x)$. Then $z(t) = \beta(x(t)) - \beta_0$ for all $t \ge 0$ if and only if $(x(0), z(0)) \in \mathcal{C}$. Moreover, for any $t \ge 0$, $x(t) \in \mathcal{S}$ if and only if $\beta(x(t)) < \infty$ if and only if $z(t) < \infty$}.
\end{lemma}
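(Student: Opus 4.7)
The plan is to establish the three equivalences in sequence. For the first claim, I would observe that the ``only if'' direction is immediate: evaluating $z(t)=\beta(x(t))-\beta_0$ at $t=0$ gives $(x(0),z(0))\in\mathcal{C}$. For the ``if'' direction, I would introduce the auxiliary signal $\tilde{z}(t)\coloneqq\beta(x(t))-\beta_0$ along the trajectory $x(t)$ of \eqref{eq:dynamics} and differentiate using the chain rule. Since $\beta=B\circ h$, we have $\nabla\beta(x)=B'(h(x))\,\nabla h(x)$, and by the identity $\Phi=\tfrac{dB}{dh}\circ B^{-1}$ together with $B^{-1}(\beta(x))=h(x)$, this equals $\Phi(\beta(x))\,\nabla h(x)=\Phi(\tilde{z}+\beta_0)\,\nabla h(x)$. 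Substituting \eqref{eq:dynamics}, I obtain that $\tilde{z}$ satisfies exactly the scalar ODE \eqref{eq:ZDynamics}.

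Next I would invoke uniqueness of solutions. Treating $x(t)$ as an exogenous signal, the right-hand side of \eqref{eq:ZDynamics} is locally Lipschitz in $z$ because $\Phi\in\mathcal{C}^{\infty}$ (ensured by $\tfrac{dB}{da}\circ B^{-1}\in\mathcal{C}^{\infty}$), and the remaining factors $\nabla h(x(t))$, $Y(x(t))\theta+f(x(t))+g(x(t))u(x(t))$ are locally bounded because $f,g,Y,h,u$ are locally Lipschitz. Therefore, if $(x(0),z(0))\in\mathcal{C}$, then $z(0)=\tilde{z}(0)$, and by the Picard--Lindel\"of uniqueness theorem $z(t)=\tilde{z}(t)=\beta(x(t))-\beta_0$ for all $t\ge 0$ on the interval of existence.

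For the second set of equivalences, I would appeal directly to the properties of $\beta$ established in \eqref{eq:barrierFunction}: $\beta(x)\ge 0$ on $\operatorname{int}(\mathcal{S})$ and $\beta(x)\to\infty$ as $x\to\partial\mathcal{S}$, while $\beta$ is undefined (equivalently, taken as $+\infty$) outside $\mathcal{S}$. This yields $x(t)\in\mathcal{S}$ $\Leftrightarrow$ $\beta(x(t))<\infty$. Combining with the first part, and using that $\beta_0=\beta(0)$ is a finite constant (since the origin lies strictly inside the safe set by $f(0)=0$ and the implicit assumption $h(0)>0$), the identity $z(t)=\beta(x(t))-\beta_0$ immediately gives $\beta(x(t))<\infty$ $\Leftrightarrow$ $z(t)<\infty$.

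The main obstacle I anticipate is not the chain-rule computation but the careful justification of the uniqueness argument, which must rule out the possibility of spurious trajectories of \eqref{eq:ZDynamics} that diverge from $\beta(x(t))-\beta_0$ even though both share the same initial condition. This rests squarely on the smoothness of $\Phi$, so in the write-up I would state explicitly where local Lipschitz continuity of the RHS in $z$ is used, and note that the equivalence $z(t)<\infty\Leftrightarrow\beta(x(t))<\infty$ is exactly the mechanism by which safety of $\mathcal{S}$ is encoded as boundedness of the barrier state, setting up the Lyapunov analysis that follows.
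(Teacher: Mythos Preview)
Your proposal is correct and follows essentially the same argument as the paper's proof: differentiate $\beta(x(t))-\beta_0$ along trajectories, observe that it satisfies the same scalar ODE \eqref{eq:ZDynamics} as $z$, and invoke uniqueness of solutions under local Lipschitzness to conclude the two coincide when their initial values agree; the final equivalences then follow from the defining property of $\beta$. Your write-up is simply more explicit about the chain-rule identity $\nabla\beta(x)=\Phi(\beta(x))\nabla h(x)$ and about where local Lipschitz continuity in $z$ is used, which is a welcome level of detail.
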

\begin{proof}
\TOC{Differentiate $\beta(x(t)) - \beta_0$ along solutions and compare with $\dot z$ in \eqref{eq:ZDynamics}. By uniqueness of solutions under local Lipschitzness, the two scalar trajectories coincide whenever they agree at $t=0$. The final equivalences follow from the first part and the defining property $\beta(x) = \infty$ if and only if $x \notin \mathcal{S}$}.
\end{proof}

\begin{remark}
The set $\mathcal{C}\subset\mathbb{R}^{n}\times\mathbb{R}$ is the graph of the mapping 
$x \mapsto \beta(x)-\beta_{0}$.  
While $\mathcal{C}$ is not a subset of $\mathcal{S}$, its projection onto the state space satisfies
$\pi_{x}(\mathcal{C})=\{x\in\mathbb{R}^{n}\mid \beta(x)<\infty\}=\B{\text{int}}(\mathcal{S})$.
Thus, $\mathcal{C}$ provides a lifted representation of the interior of the safe set in the augmented $(x,z)$-coordinates.
\end{remark}


Lemma~\ref{lem:cone} formalizes that the safety condition is dynamically equivalent to forward-boundedness of a lifted coordinate, allowing the use of Lyapunov arguments directly on $z$ rather than on $\beta(x)$.

Since $\theta$ in \eqref{eq:ZDynamics} is unknown, we introduce the estimated \TOC{BaS}, denoted \TOC{by} $\hat{z} \in \mathbb{R}$, which evolves according to
\begin{multline}\label{eq:ZhatDynamics}
\dot{\hat{z}} = \Phi(z + \beta_0) \, \nabla h(x) \Big( Y(x)\hat{\theta} + f(x) + g(x) u \Big) + \gamma( z - \hat{z} ),
\end{multline}
where $\gamma > 0$ is a user-defined observer gain.  

Let the \B{BaS} estimation error be defined as $\tilde{z} \coloneqq z - \hat{z}$. Subtracting \eqref{eq:ZhatDynamics} from \eqref{eq:ZDynamics} yields the error dynamics
\begin{equation}\label{eq:ZtildeDynamics}
\dot{\tilde{z}} = \Phi(z + \beta_0) \, \nabla h(x) Y(x) \tilde{\theta} - \gamma \tilde{z},
\end{equation}
where $\tilde{\theta} \coloneqq \theta - \hat{\theta}$ is the parameter estimation error.

The next section introduces a parameter-estimation law tailored to \eqref{eq:ZtildeDynamics} that preserves the forward-invariant cone of Lemma~\ref{lem:cone} while ensuring stability and boundedness of $(\tilde{z},\tilde{\theta})$ under relaxed excitation conditions.

\section{Parameter Estimator Design}
\label{section:parameterEstimator}

In this section, an ICL update law \B{(\cite{Chowdhary2010})} is developed to estimate the unknown parameters. The parameter estimator relies on the fact that the difference \B{between the estimated and the true BaS} at time $t$ and time $t-T$ can be expressed as an affine function of the parameters $\theta$ and a residual that reduces with reducing \B{BaS} estimation error.

\begin{lem}\label{lem:ErrorTermformulation}
For any fixed delay $T>0$ and all $t\ge T$, the system dynamics satisfy the incremental relation
\begin{equation}\label{eq:incrementalRelation}
    \mathcal{X}(t)
    \;=\;
    \mathcal{Y}(t)\theta + \mathcal{G}_{fu}(t),
\end{equation}
where $\mathcal{X}(t) \coloneqq x(t) - x(t - T)$, $\mathcal{Y}(t) \coloneqq \int_{t-T}^{t} Y(s(\tau))\,d\tau$, and $\mathcal{G}_{fu}(t) \coloneqq \int_{t-T}^{t} \big(f(s(\tau)) + g(s(\tau))u(\tau)\big)\,d\tau$.
\end{lem}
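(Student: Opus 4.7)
The plan is to establish the identity by directly integrating the dynamics \eqref{eq:dynamics} over the fixed window $[t-T, t]$. First I would invoke the fact that, under Assumption~\ref{ass:locallylipschitzfunctions} together with an admissible (hence at least locally integrable) control $u(\cdot)$, the trajectory $x(\cdot)$ is absolutely continuous on any bounded interval on which it exists. This regularity justifies the fundamental theorem of calculus, giving $x(t) - x(t-T) = \int_{t-T}^{t} \dot{x}(\tau)\, d\tau$ for every $t \ge T$.

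Next I would substitute $\dot{x}(\tau)$ using \eqref{eq:dynamics} and split the resulting integral by linearity into three parts. Since $\theta$ is a constant vector in $\mathbb{R}^{p}$, it factors out of the first part, yielding
\begin{equation*}
x(t) - x(t-T) = \left(\int_{t-T}^{t} Y(x(\tau))\, d\tau\right)\theta + \int_{t-T}^{t} \bigl(f(x(\tau)) + g(x(\tau)) u(\tau)\bigr)\, d\tau.
\end{equation*}
Matching the three terms against the definitions of $\mathcal{X}(t)$, $\mathcal{Y}(t)$, and $\mathcal{G}_{fu}(t)$ (reading the dummy argument written as $s(\tau)$ in the statement as $x(\tau)$, the closed-loop state trajectory) then recovers \eqref{eq:incrementalRelation} exactly.

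No step presents a real obstacle: the lemma is an exact algebraic identity obtained from the fundamental theorem of calculus applied to the linearly parameterized drift, with no approximation or limiting argument involved. The only items requiring any attention are the regularity prerequisites that legitimize interchanging the linear operations with integration, and those are supplied directly by the Lipschitz hypotheses in Assumption~\ref{ass:locallylipschitzfunctions} and the admissibility of the input. Consequently, the proof reduces to one line of integration plus an identification of integrands, and the chief value of the statement lies not in its derivation but in the form of the regressor $\mathcal{Y}(t)$ and residual $\mathcal{G}_{fu}(t)$, which will be exploited by the subsequent ICL update law.
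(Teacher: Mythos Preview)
Your proposal is correct and follows exactly the same approach as the paper, which simply states that the result follows directly from the Fundamental Theorem of Calculus applied to \eqref{eq:dynamics}. Your write-up is in fact more detailed than the paper's one-line proof, but the idea is identical.
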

\begin{proof}
The result follows directly from the Fundamental Theorem of Calculus applied to \eqref{eq:dynamics}.
\end{proof}

\subsection{ICL Update Law and Data Management}
Lemma~\ref{lem:ErrorTermformulation} implies that, for any time instant $\tau$, the parametric regression error satisfies
\begin{equation}\label{eq:parametricError}
    \mathcal{Y}(t)\,\tilde{\theta}(\tau)
    \;=\;
    \mathcal{X}(t) - \mathcal{G}_{fu}(t) - \mathcal{Y}(t)\hat{\theta}(\tau),
\end{equation}
which motivates the following ICL update law
\begin{equation}
\dot{\hat{\theta}} = \proj_{\Theta}\big(\hat{\theta},\, \Gamma\phi\big), \label{eq:thetaUpdate}
\end{equation}
\TOC{where $\Gamma \in \mathbb{R}^{p \times p}$ is a time-varying positive definite least-squares gain matrix that satisfies
\begingroup\medmuskip=0mu\begin{equation}\label{eq:gammaUpdate}
   \scalebox{0.93}{$ \dot{\Gamma} = \begin{cases} 
       \displaystyle \beta_{\theta}\Gamma - k_{\theta}\Gamma \sum_{i=1}^{N} \frac{\mathcal{Y}_i^{\top}\mathcal{Y}_i}{1+\kappa\|\mathcal{Y}_i\|^{2}} \Gamma, & \text{if } \hat{\theta} \in \operatorname{int}(\Theta)\\[-5pt]
             &  \text{or } ( \hat{\theta} \in \partial\Theta
        \text{ and } (\Gamma\phi)^{\top}\hat{\theta} \le 0 ),\\
       0 & \text{otherwise},
   \end{cases} $} 
\end{equation}\endgroup
and} \begin{multline}
    \phi\coloneqq Y(x)^{\top}\nabla h(x)^{\top}\Phi(z+\beta_0)^{\top} \tilde{z}
\\+ k_{\theta}\sum_{i=1}^{N}
\frac{\mathcal{Y}_i^{\top}\big(\mathcal{X}_i - \mathcal{G}_{fu,i} - \mathcal{Y}_i\hat{\theta}\big)}{1+\kappa\|\mathcal{Y}_i\|^{2}},
\end{multline} with $\mathcal{X}_i \coloneqq \mathcal{X}(t_i)$, $\mathcal{Y}_i \coloneqq \mathcal{Y}(t_i)$, $\mathcal{G}_{fu,i}\coloneqq \mathcal{G}_{fu}(t_i)$. 
The gains $k_{\theta},\beta_{\theta},\kappa\in\mathbb{R}_{>0}$ are user-defined adaptation gains. Since $\Gamma(t)$ is positive definite, there exist constants $0<\underline{\Gamma}\le \overline{\Gamma}$ such that, for all $t\ge 0$,
\begin{equation}\label{eq:GammaBound}
\underline{\Gamma}\, I_{p} \;\preceq\; \Gamma(t) \;\preceq\; \overline{\Gamma}\, I_{p}.
\end{equation}

To facilitate data reuse, a finite set of delayed samples is maintained in a \emph{history stack}
\begin{equation}\label{eq:historyStackDef}
    \mathcal{H}
    \coloneqq
    \big\{\,\mathcal{X}_i,\,\mathcal{Y}_i,\,\mathcal{G}_{fu,i}\,\big\}_{i=1}^{N},
\end{equation}
which is updated at discrete times $t_i$ based on data persistence conditions.


To facilitate the analysis, let the parameter estimation error be defined as $\tilde{\theta} \coloneqq \hat{\theta} - \theta$. The corresponding dynamics follow from \eqref{eq:thetaUpdate} as
\begingroup\begin{equation}\label{eq:tildeThetaDyn}
\scalebox{0.95}{$\dot{\tilde{\theta}} 
= \proj_{\Theta}\left(\hat{\theta}, 
\left(- k_{\theta} \Gamma
\Sigma_{\mathcal{Y}}\tilde{\theta} 
-\Gamma Y(x)^{\top}\nabla h(x)^{\top}\Phi(z+\beta_0)^{\top} \tilde{z}\right) 
\right)$},
\end{equation}\endgroup
where $\Sigma_{\mathcal{Y}} \in \mathbb{R}^{p\times p}$ denotes the weighted regressor matrix defined as
$\Sigma_{\mathcal{Y}} \coloneqq \sum_{i=1}^{N} \sigma_i \mathcal{Y}_i^\top \mathcal{Y}_i$ with 
$\sigma_i \;=\; \frac{1}{1 + \kappa \|\mathcal{Y}_i\|^2} > 0$. 


\begin{assum}\label{ass:fullRank}
There exists a finite collection of sampling instants $\{t_i\}_{i=1}^{N}$ such that
\begin{equation}\label{eq:rankCond}
    \lambda_{\min}(\Sigma_{\mathcal{Y}})
    \eqqcolon \underline{\sigma}_{\theta} > 0.
\end{equation}
\end{assum}
\begin{remark}
The assumption is commonly adopted in ICL-based parameter estimation frameworks (\cite{Chowdhary2010, SCC.Parikh.Kamalapurkar.ea2019,  SCC.Ogri.Bell.ea2023, SCC.Ogri.Qureshi.ea2025b}) and entails that the system undergo sufficient excitation. Unlike the classical PE condition, however, it only demands excitation over a finite interval, rendering it a weaker requirement.
\end{remark}

A history stack satisfying \eqref{eq:rankCond} is said to be \emph{full rank}.  The constant $\underline{\sigma}_{\theta}$ quantifies the informational richness of the stored data and directly influences the rate of convergence of~\eqref{eq:thetaUpdate}.

To guarantee sufficient excitation and improve estimator conditioning, the history stack $\mathcal{H} = \{(\mathcal{X}_i, \mathcal{Y}_i, \mathcal{G}_{fu,i})\}_{i=1}^{N}$ is updated using a minimum-eigenvalue maximization strategy (\cite{Chowdhary2010}). A candidate data point $(\mathcal{X}^{*}, \mathcal{Y}^{*}, \mathcal{G}_{fu}^{*})$ is admitted by replacing an existing entry $(\mathcal{X}_j, \mathcal{Y}_j, \mathcal{G}_{fu,j})$ if it strictly increases the smallest eigenvalue of \B{$\Sigma_{\mathcal{Y}}$}, i.e.,
\begin{equation}\label{eq:eigMaxRuleRewritten}
\lambda_{\min}\Big(
\Sigma_{\mathcal{Y}}^{o}
\Big)
\;<\;
\frac{
\lambda_{\min}\Big(
\Sigma_{\mathcal{Y}}^{*}
\Big)
}{1+\delta}, \qquad \delta > 0,
\end{equation}
where $\Sigma_{\mathcal{Y}}^{o} = \sum_{i \neq j} \sigma_i \mathcal{Y}_i^\top \mathcal{Y}_i + \sigma_j \mathcal{Y}_j^\top \mathcal{Y}_j$,
$\Sigma_{\mathcal{Y}}^{*} = \sum_{i \neq j} \sigma_i \mathcal{Y}_i^\top \mathcal{Y}_i + \sigma^{*} \mathcal{Y}^{*\top} \mathcal{Y}^{*}$, and $\sigma^{*} = \frac{1}{1 + \kappa \|\mathcal{Y}^{*}\|^2}$.
By construction, \eqref{eq:eigMaxRuleRewritten} ensures that the minimum eigenvalue of $\Sigma_{\mathcal{Y}}$ is non-decreasing over time. This provides a rigorous guarantee that the history stack maintains \emph{numerical full rank} \TOC{once reached} and improves the conditioning of the concurrent learning update law (\cite{Chowdhary2010})\TOC{.}

In the following section, we develop a stabilizing control law for the augmented system in \eqref{eq:augmentedSystem} that minimizes the cost functional introduced in \eqref{eq:costFunctional} and satisfies the safety requirements of the original system \eqref{eq:dynamics}.

\section{Control Design}
\B{Lemma \ref{lem:safety} and \ref{lem:cone} indicate that the trajectories that remain in the safe set yield bounded BaS. Hence, to design a control policy that enforces the boundedness of BaS, we augment it to the state space of the system.}

To construct the safety-embedded system, we define the augmented state $s \coloneqq [x^\top, z]^\top \in \mathbb{R}^{n + 1}$. The corresponding augmented dynamics take the form
\begin{equation}\label{eq:augmentedSystem}
\dot{s} = A(s)\theta + F(s) + G(s) u, \quad s(0) = s_{0},
\end{equation}
where $A(s) \coloneqq [
    Y(x)^{\top}, \Phi(z + \beta_{0})\nabla h(x) Y(x)]^{\top}$, $F(s) = [f(x)^{\top}, \Phi(z + \beta_{0})\nabla h(x) f(x)]^{\top}$, 
and $G(s) = [ g(x)^{\top}, \Phi(z + \beta_{0}) \, \nabla h(x) g(x)]^{\top}$. The construction of $z$ ensures that conditional control invariance of the safety set $\mathcal{S}$ is equivalent to \B{uniform local} boundedness of the $(n+1)^{\rm{st}}$ coordinate of $s$, so the constraint is now encoded as a property of the dynamics rather than of the state space.

\subsection{Optimal Control Problem}
Let $Q\in\mathbb{S}_{+}^{n+1}$  and $R\in\mathbb{S}_{+}^{m}$, where $\mathbb{S}_{+}^{n}$ is a positive definite square matrix of order $n$.  
For a measurable input $ {u} :\mathbb{R}_{\ge 0}\to\mathbb{R}^{m}$, define the cost functional
\begin{equation}\label{eq:costFunctional}
    J(s_{0}, u) = \int_{0}^{\infty} s(\tau)^{\top}Qs(\tau) + u(\tau)^{\top}Ru(\tau)\, d\tau,
\end{equation}
where $s(\cdot)$ is the solution of \eqref{eq:augmentedSystem} \TOC{starting from initial state $s_{0}$ and under control policy $u(\cdot)$}.  
\begin{remark}
\TOC{In the following, we assume that an solution of the optimal control problem exists. In doing so, we implicitly assume that the augmented system \eqref{eq:augmentedSystem} is stabilizable. If the system is not stabilizable, then the dynamic compensation technique proposed in \cite{SCC.Almubarak.Sadegh.ea2025} can be employed to ensure stabilizability.}
\end{remark}
The value function \B{of the optimal control problem to minimize $J$ is defined as}
\begin{equation}\label{eq:valueFunction}
    V^{*}(s_{0})
    \coloneqq
    \inf_{u(\cdot)}
    J(s_{0},u),
\end{equation}
with the infimum taken over all admissible inputs for which \eqref{eq:costFunctional} is well-defined.  
\begin{assum}\label{eq:ValueDifferentiability}
The optimal value function $V^{*} : \mathbb{R}^{n+1} \to \mathbb{R}$ is continuously differentiable on $\mathbb{R}^{n+1}$, i.e.,
$V^{*} \in C^{1}(\mathbb{R}^{n+1}; \mathbb{R})$. 
\end{assum}
Under Assumption~\ref{eq:ValueDifferentiability}, the gradient $\nabla V^{*}(s)$ exists and is continuous for all $s \in \mathbb{R}^{n+1}$. Hence, $V^{*}$ satisfies the stationary Hamilton--Jacobi--Bellman (HJB) equation \B{\cite[Theorem 1.5]{SCC.Kamalapurkar.Walters.ea2018}}.
\begin{equation}\label{eq:HJB}
    \inf_{u\in\mathbb{R}^{m}}
    \Big\{
        \nabla V^{*}(s)^{\top}\big(F(s)+G(s)u\big)
        + s^{\top} Q\, s
        + u^{\top} R\, u
    \Big\}
    = 0,
\end{equation}
\TOC{for all $s \in \mathbb{R}^{n+1}$}. The Hamiltonian associated with \eqref{eq:HJB} is strictly convex in $u$ due to $R\succ 0$.  
Thus, for each $s\in\mathbb{R}^{n+1}$, the minimizing control is uniquely determined by the first-order optimality condition
\begin{multline}
    0
    =
    \frac{\partial}{\partial u}
    \Big(
        \nabla V^{*}(s)^{\top}G(s)u 
        + u^{\top}Ru
    \Big)
    \\ =
    G(s)^{\top} \nabla V^{*}(s) + 2Ru,
\end{multline}
\TOC{which} yields the optimal feedback control policy
\begin{equation}\label{eq:optimalU}
    u^{*}(s)
    =
    -\frac{1}{2}
    R^{-1} G(s)^{\top} \nabla V^{*}(s).
\end{equation}
Substitution of \eqref{eq:optimalU} into \eqref{eq:HJB} yields the reduced-form HJB equation
\begin{multline}\label{eq:HJBReduced}
    \nabla V^{*}(s)^{\top}F(s)
    + s^{\top} Q\, s
    \\ - \frac{1}{4}
        \nabla V^{*}(s)^{\top}
        G(s) R^{-1} G(s)^{\top}
        \nabla V^{*}(s)
    = 0.
\end{multline}
Equations \eqref{eq:optimalU} characterize the optimal controller for the safety-embedded dynamics.  
Notably, the BaS coordinate transforms the state-\B{constrained optimal control} problem into an unconstrained \TOC{optimal control} problem, with safety encoded implicitly through the structure of $F$ and $G$.

\subsection{Value Function Approximation}

For general nonlinear control systems, the HJB equation associated with the optimal control problem \eqref{eq:augmentedSystem}--\eqref{eq:HJBReduced} rarely admits a closed-form solution. To address this, we adopt an approximate dynamic programming (ADP) formulation over a compact set $\Omega \subset \mathbb{R}^{n+1}$ with \B{ $\Omega \supset \mathcal{C}$}, following the approximation architecture proposed in~\cite{SCC.Bhasin.Kamalapurkar.ea2012}.  

Let $L \in \mathbb{N}$ and let $\sigma : \mathbb{R}^{n+1} \to \mathbb{R}^{L}$ denote a vector of continuously differentiable basis functions, $\sigma \in C^{1}(\mathbb{R}^{n+1};\mathbb{R}^{L})$, and let $W \in \mathbb{R}^{L}$ be an unknown parameter vector. The optimal value function $V^{*}$ is said to admit a {parametric approximation on $\Omega$ if there exists a continuously differentiable residual function $\epsilon : \Omega \to \mathbb{R}$ such that, for all $s \in \Omega$,
\begin{equation}
    V^{*}(s) = W^{\top}\sigma(s) + \epsilon(s). \label{eq:VstarApprox}
\end{equation}
Invoking the universal approximation theorem \cite[Thm.~1.5]{SCC.Sauvigny2012}, the function reconstruction error and its gradient are bounded as $\sup_{s \in \Omega}\|\epsilon(s)\|<\bar{\epsilon}$ and $\sup_{s \in \Omega}\|\nabla \epsilon(s)\| <\overline{\nabla\epsilon}$. The ideal value function weights satisfy $\|W\|<\bar{W}$, and the activation functions satisfy $\sup_{s \in \Omega}\|\sigma(s)\|<\bar{\sigma}$ and $\sup_{s \in \Omega}\|\nabla \sigma(s)\|<\overline{\nabla\sigma}$.

Since the ideal weight vector $W$ is unknown, both the value function and the feedback policy are approximated by parametrized functions.
Let $\hat{W}_{c}, \hat{W}_{a} \in \mathbb{R}^{L}$ denote adaptive weight estimates.  
The critic is defined by
\begin{equation}\label{eq:approxValue}
    \hat{V}(s, \hat{W}_{c}) \coloneqq \hat{W}_{c}^{\top}\sigma(s), \quad \forall s \in \Omega,
\end{equation}
and the actor is defined by
\begin{equation}\label{eq:approxControl}
    \hat{u}(s, \hat{W}_{a})
    \coloneqq
    -\frac{1}{2}\, R^{-1} G(s)^{\top}\nabla \sigma(s)^{\top}\hat{W}_{a}, \quad \forall s \in \Omega.
\end{equation}

\subsection{Bellman Error}
Using the critic $\hat{V}$ and actor $\hat{u}$, the temporal difference, also referred to as the Bellman error, is defined as
\begin{multline}\label{eq:BellmanError}
    \hat{\delta}(s, \hat{\theta}, \hat{W}_{a}, \hat{W}_{c})
    \coloneqq
    s^{\top} Q s
    +
    \hat{u}(s, \hat{W}_{a})^{\top} R \hat{u}(s, \hat{W}_{a}) \\  \nabla \hat{V}(s, \hat{W}_{c})^{\top}\bigl(A(s)\hat{\theta} + F(s) + G(s)\hat{u}(s, \hat{W}_{a})\bigr).
\end{multline}

Using the approximation representation \eqref{eq:approxValue}--\eqref{eq:approxControl}, we introduce the critic weight error and actor weight error as $\tilde{W}_{c} \coloneqq W - \hat{W}_{c}$ and $\tilde{W}_{a} \coloneqq W - \hat{W}_{a}$.
For the subsequent analysis, Bellman Error can be expressed in terms of the errors $\tilde{W}_{c}$ and $\tilde{W}_{a}$ as
\begin{equation}\label{eq:bellmanerrorfinal}
\hat{\delta} = -\omega^{\top}\tilde{W}_{c} 
    + \frac{1}{4}\tilde{W}_{a}^{\top} G_{\sigma}\tilde{W}_{a} 
    - W^{\top}\nabla\sigma\, A \tilde{\theta} 
    + \Delta, 
\end{equation}
where $\omega \coloneqq \nabla\sigma \left(A(s)\hat{\theta} + F(s) + G(s)\hat{u}(s, \hat{W}_{a})\right)$,
$G_{R} \coloneqq G R^{-1} G^{\top}$, $G_{\epsilon} \coloneqq \nabla\epsilon G_{R}\nabla\epsilon^{\top}$, $G_{\sigma} \coloneqq \nabla\sigma G R^{-1} G^{\top} \nabla\sigma^{\top}$, $\Delta \coloneqq \frac{1}{2}W^{\top}\nabla\sigma G_{R}\nabla\epsilon^{\top}+ \frac{1}{4}G_{\epsilon}- \nabla\epsilon F$, $A \coloneqq A(s)$, $F \coloneqq F(s)$, $\epsilon \coloneqq \epsilon(s)$, and $\sigma \coloneqq \sigma(s)$.

Online reinforcement-learning schemes commonly impose a persistence-of-excitation (PE) condition to ensure parameter convergence; however, such a condition cannot be enforced a priori and is generally not verifiable from online data. When a model of the system is available, Bellman-error (BE) extrapolation provides a virtual-excitation mechanism that yields a PE-like condition whose satisfaction can be assessed through a minimum-eigenvalue criterion. Since the BE can be evaluated at arbitrary states whenever the model is known, one may select a set of virtual states \B{$\{s_{k}(t)\}_{k=1}^{N}$} at each t and compute the BE at these extrapolated points. The extrapolated Bellman error is defined as
\begin{equation}
    \hat{\delta}_{k}
    \coloneqq \hat{\delta}(s_{k}, \hat{\theta}, \hat{W}_{a}, \hat{W}_{c}),
\end{equation}
and admits the representation 
\begin{equation}
\hat{\delta}_{k}
= -\omega_{k}^{\top}\tilde{W}_{c} 
    + \frac{1}{4}\tilde{W}_{a}^{\top} G_{\sigma_{k}} \tilde{W}_{a} 
    - W^{\top}\nabla\sigma_{k}\, A_{k}\tilde{\theta} 
    + \Delta_{k}, 
\end{equation}
where $\omega_{k} \coloneqq \nabla\sigma_{k} \left(A_{k}\hat{\theta} + F_{k} + G_{k}\hat{u}(s_{k},\hat{W}_{a}) \right)$, $G_{R_{k}} \coloneqq G_{k} R^{-1} G_{k}^{\top}$, $G_{\sigma_{k}} \! \coloneqq \! \nabla\sigma_{k} G R^{-1} G^{\top} \nabla\sigma_{k}^{\top}$, $G_{\epsilon_{k}}  \! \coloneqq \! \nabla\epsilon_{k} G_{R_{k}} \nabla\epsilon_{k}^{\top}$, 
 $\Delta_{k} \coloneqq \frac{1}{2}W^{\top}\nabla\sigma_{k} G_{R_{k}} \nabla\epsilon_{k}^{\top} + \frac{1}{4}G_{\epsilon_{k}} - \nabla\epsilon_{k} F_{k}$, $A_{k} \coloneqq A(s_{k})$, $F_{k} \coloneqq F(s_{k})$, $\epsilon_{k} \coloneqq \epsilon(s_{k})$, and $\sigma_{k} \coloneqq \sigma(s_{k})$. 
 
Under Assumption~\ref{ass:locallylipschitzfunctions} and the universal approximation property of neural networks, the Bellman-error residual satisfies $\sup_{s\in\Omega}\|\Delta(s)\| \le \overline{\Delta}$ for some constant $\overline{\Delta}>0$. Likewise, for any extrapolated state $s_{k}\in\Omega$, the corresponding residual obeys $\|\Delta_{k}\| \le \overline{\Delta}$.

\subsection{Update Laws for Actor and Critic Weights}
 
Based on the subsequent stability analysis, the critic update law is obtained by applying a gradient-type adaptation to the squared Bellman error. This yields
\begin{equation}\label{eq:WcUpdate}
    \dot{\hat{W}}_{c} = -k_{c_{1}}\Upsilon\frac{\omega}{\rho}\hat{\delta}-\frac{k_{c_{2}}}{N}\Upsilon \sum_{k=1}^{M} \frac{\omega_{k}}{\rho_{k}}\delta_{k},
\end{equation}
where $k_{c_{1}},k_{c_{2}}>0$ are adaptation gains and the normalization factors $\rho = 1 + \nu\,\omega^{\top}\omega$ and $\rho_{k} = 1 + \nu\,\omega_{k}^{\top}\omega_{k}$ use a regularization constant $\nu>0$.
The gain matrix $\Upsilon: \mathbb{R}_{\geq 0} \to \mathbb{R}^{L \times L}$ follows a standard recursive least-squares update,
\begin{equation}\label{eq:UpsilonUpdate}
    \dot{\Upsilon}
    = \beta_{c} \Upsilon -k_{c_{1}}\Upsilon\frac{\omega\omega^{\top}}{\rho^{2}}\Upsilon
    -\frac{k_{c}}{N}\Upsilon \sum_{i=1}^{M}
        \frac{\omega_{k}\omega_{k}^{\top}}{\rho_{k}^{2}} \Upsilon,
\end{equation}
with $\Upsilon(0)=\Upsilon_{0}$ and forgetting factor $\beta_{c}>0$. To guarantee parameter convergence, the subsequent stability analysis employs the following excitation condition.
\begin{assum}\label{ass:LearnCond}
There exists a constant $\underline{c}_{1}>0$ such that the trajectories $\{s_{k}(\cdot)\}_{k=1}^{N}$ satisfy
\begin{equation}\label{eq:learnCond}
    \underline{c}_{1}
    \le
    \inf_{t\ge 0}
    \lambda_{\min}\!
    \left(
        \frac{1}{N}
        \sum_{k=1}^{M}
        \frac{
            \omega_{k}(t)\,\omega_{k}^{\top}(t)
        }{
            \rho_{k}^{2}(t)
        }
    \right).
\end{equation}
\end{assum}
As shown in \cite{SCC.Kamalapurkar.Rosenfeld.ea2016} Lemma~1, provided Assumption~\ref{ass:LearnCond} holds and $\lambda_{\min}\{{\Upsilon_{0}^{-1}}\}> 0$, the update law in \eqref{eq:UpsilonUpdate} ensures that the least squares \TOC{gain} satisfies
\begin{equation}\label{eq:UpsilonBound}
	\underline{\Upsilon}I_{L}\preceq\Upsilon\left(t\right)\preceq\overline{\Upsilon}I_{L},   
\end{equation}
$\forall t\in\mathbb{R}_{\geq 0}$ and for some  $\overline{\Upsilon},\underline{\Upsilon}>0$.

The actor update law is obtained by gradient descent on a regularized loss formed from the same Bellman residuals. The resulting adaptation law is
\begin{multline}\label{eq:WaUpdate}
\dot{\hat{W}}_{a} = -k_{a_{1}}\left(\hat{W}_{a}-\hat{W}_{c}\right) -k_{a_{2}}\hat{W}_{a} + \frac{k_{c_{1}}G_{\sigma}^{\top}\hat{W}_{a}\omega^{\top}}{4\rho}\hat{W}_{c}  \\+ \sum_{k=1}^{M}\frac{k_{c}G_{\sigma_{k}}^{\top}\hat{W}_{a}\omega_{k}^{\top}}{4N\rho_{k}}\hat{W}_{c},
\end{multline}
where $k_{a_{1}}, k_{a_{2}} > 0$ are adaptation gains. 

The control input applied to the system \eqref{eq:augmentedSystem} uses actor weights generated by \eqref{eq:WcUpdate} as
\begin{equation}\label{eq:uControl}
    u(t) = \hat{u}\!\left(s(t),\,\hat{W}_{a}(t)\right),
    \qquad t\ge 0.
\end{equation}

The adaptive actor-critic update laws presented in this section will be utilized in the subsequent stability analysis to prove the uniform ultimate boundedness of the trajectories of the closed-loop system.


\section{Stability Analysis}
 To facilitate the analysis, let the concatenated vector 
 \begin{equation}
     Z \coloneqq \begin{bmatrix}
          s^{\top} & \tilde{z}^{\top} & \tilde{\theta}^{\top} & \tilde{W}_{c}^{\top} & \tilde{W}_{a}^{\top}
     \end{bmatrix}^{\top} \in \mathbb{R}^{n + p + 2L + 2}
 \end{equation}
represent the state of the closed-loop system and let a continuously differentiable candidate Lyapunov function, $V_{L}: \mathbb{R}^{n + p + 2L + 2} \times \mathbb{R}_{\geq 0} \to \mathbb{R}$, be defined as,
 \begin{multline}\label{eq:Lyap}
     V_{L}\left(Z, t\right) \coloneqq V^{*}(s) + \frac{1}{2}\tilde{z}^\top\tilde{z} + \frac{1}{2}\tilde{\theta}^\top \Gamma^{-1}(t)\tilde{\theta} \\ + \frac{1}{2}\tilde{W}_{c}^{\top}\Upsilon^{-1}(t)\tilde{W}_{c} + \frac{1}{2}\tilde{W}_{a}^{\top}\tilde{W}_{a}.
 \end{multline}
Using the bound in \eqref{eq:UpsilonBound} and since the candidate Lyapunov function \eqref{eq:Lyap} is positive definite, \cite[Lemma 4.3]{SCC.Khalil2002} can be used to conclude that it is bounded as
\begin{equation}
\underline{v}_{l}\left(\left\Vert Z\right\Vert \right)\leq V_{L}\left(Z,t\right)\leq\overline{v}_{l}\left(\left\Vert Z\right\Vert \right),\label{eq:VBound}
\end{equation}
for all $t \in \mathbb{R}_{\geq 0}$ and for all $Z\in\mathbb{R}^{n + p + 2L + 2}$, where $\underline{v}_{l},\overline{v}_{l}:\mathbb{R}_{\geq 0}\rightarrow\mathbb{R}_{\geq 0}$ are class $\mathcal{K}$ functions. Using the least square gain update law in \eqref{eq:UpsilonUpdate} and the bound in \eqref{eq:UpsilonBound}, the normalized regressor $\|\frac{\omega(t)}{\rho(t)}\|$ is bounded as $\|\frac{\omega(t)}{\rho(t)}\| \leq \frac{1}{2\sqrt{\nu\underline{\Upsilon}}}, \forall t \geq 0$. 
For any continous function $f:\Omega\to\mathbb{R}^m$ we denote by $\overline{f}$ any constant satisfying
$\sup_{s\in\Omega}\|f(s)\|\le \overline{f}$. When unambiguous we take $\overline{f}=\sup_{s\in\Omega}\|f(s)\|$. To facilitate the analysis, \B{define} the constants
\begin{align*}
\varpi_{1} &\coloneqq (k_{c_{1}}+k_{c_{2}})\,\overline{W}\,\overline{\nabla\sigma}\,\overline{A},\\
\varpi_{2} &\coloneqq (k_{c_{1}}+k_{c_{2}})\,\overline{\Delta},\\
\varpi_{3} &\coloneqq \tfrac{1}{2}\overline{W}\,\overline{G_{\sigma}}
    + \tfrac{1}{2}\overline{\nabla }\,\overline{G}\,\overline{\nabla \sigma}
    + k_{a_{2}}\,\overline{W}
    + \tfrac{1}{4}(k_{c_{1}}+k_{c_{2}})\,\overline{W}^{2}\,\overline{G_{\sigma}},\\
\varpi_{4} &\coloneqq k_{a_{1}}+\tfrac{1}{4}(k_{c_{1}}+k_{c_{2}})\,\overline{W}\,\overline{G_{\sigma}},\\
\varpi_{5} &\coloneqq \tfrac{1}{4}(k_{c_{1}}+k_{c_{2}})\,\overline{W}\,\overline{G_{\sigma}}.
\end{align*}
\TOC{Also, let the positive constant residual be defined as
$\iota \coloneqq \frac{\varpi_{2}^{2}}{2\varepsilon_2} + \frac{(\varpi_{3} + \varpi_{4})^{2}}{2\varepsilon_3} + \frac{1}{4}\overline{G}_{\epsilon}$}.
The following theorem establishes uniform ultimate boundedness of the trajectories of the closed-loop system.
\begin{theorem}\label{thm:theorem1}
   Let \TOC{$\overline{\mathbb{B}}_{\chi} \subset \Omega \times \mathbb{R}^{p + 2L + 1}$} be a closed ball of radius $\chi > 0$ containing the origin. Provided Assumptions \ref{ass:locallylipschitzfunctions}--\ref{ass:LearnCond} hold, if the unknown parameters $\tilde{\theta}$ are estimated using the adaptive update law in \eqref{eq:thetaUpdate} and \eqref{eq:gammaUpdate}, the sufficient conditions in \eqref{eq:suffconds} and    
\begin{equation}\label{eq:gainCond}
    \upsilon_l^{-1}\left(\iota\right) \leq  \overline{v}_{l}^{-1}\left(\underline{v}_{l}\left(\chi\right)\right),
\end{equation}
are satisfied, and the weights \B{$\hat{W}_{c}$, $\Upsilon$, $\hat{W}_{a}$, $\hat \theta$, and $\Gamma$  are updated according to \eqref{eq:WcUpdate}, \eqref{eq:UpsilonUpdate}, \eqref{eq:WaUpdate}, \eqref{eq:thetaUpdate}, and \eqref{eq:gammaUpdate} respectively}, then the concatenated state, $Z$, is locally uniformly ultimately bounded under the controller designed in \eqref{eq:uControl}. Furthermore, let $\mathcal{S}_0$ be chosen as
\begin{equation}\label{eq:S0}
\mathcal{S}_0 \coloneqq \{\, x(0)\in\mathcal{S} \mid Z(0) \leq \overline{v}_{l}^{-1}\!\big(v_{l}(\chi)\big) \,\},
\end{equation}
then every trajectory with $x(0)\in\mathcal{S}_0$, satisfies $x(t)\in\mathcal{S}$ for all $t\ge 0$.
\end{theorem}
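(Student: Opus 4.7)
The plan is to differentiate the composite Lyapunov function \eqref{eq:Lyap} along the closed-loop trajectories generated by \eqref{eq:augmentedSystem}, \eqref{eq:ZtildeDynamics}, \eqref{eq:tildeThetaDyn}, \eqref{eq:WcUpdate}, \eqref{eq:UpsilonUpdate}, \eqref{eq:WaUpdate}, and \eqref{eq:uControl}, and then bound each of the resulting terms so that $\dot V_L$ is majorized by a negative definite function of $\|Z\|$ plus the constant residual $\iota$. First, I would handle the $V^{*}$ piece: using $\dot V^{*}(s) = \nabla V^{*}(s)^{\top}(A(s)\theta + F(s) + G(s)u)$, I would add and subtract the optimal action $u^{*}(s)$ and invoke the reduced HJB equation \eqref{eq:HJBReduced} so that the stage cost $s^{\top}Qs$ is extracted and the remainder is expressed as a cross term involving $\hat u(s,\hat W_a)-u^{*}(s)$, the approximation errors $\epsilon$, $\nabla \epsilon$, and the unknown parameter error $\tilde\theta$ (through $A(s)\tilde\theta$). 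Standard Young inequalities then convert these cross terms into quadratic bounds controlled by the gain constants $\varpi_1,\ldots,\varpi_5$.

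Next, for the observer and parameter estimator, I would compute $\frac{d}{dt}(\tfrac12 \tilde z^{\top}\tilde z)$ and $\frac{d}{dt}(\tfrac12\tilde\theta^{\top}\Gamma^{-1}\tilde\theta)$ using \eqref{eq:ZtildeDynamics}, \eqref{eq:tildeThetaDyn}, and \eqref{eq:gammaUpdate}. The observer term produces $-\gamma\tilde z^{\top}\tilde z$ plus a coupling $\tilde z^{\top}\Phi\,\nabla h\,Y(x)\tilde\theta$. The ICL term, after using the non-expansive property of the projection operator (Lemma~E.1 of \cite{SCC.Krstic.Kanellakopoulos.ea1995}) and the bound \eqref{eq:GammaBound}, contributes $-k_\theta\tilde\theta^{\top}\Sigma_{\mathcal Y}\tilde\theta$, a term dominated by $-k_\theta\underline\sigma_\theta\|\tilde\theta\|^{2}$ via Assumption~\ref{ass:fullRank}, together with the anti-symmetric coupling that exactly cancels the observer cross term against the $\phi$-definition in \eqref{eq:thetaUpdate}. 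The time derivative of $\tfrac12\tilde\theta^{\top}\Gamma^{-1}\tilde\theta$ also produces a term controlled by the forgetting factor $\beta_\theta$.

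For the critic and actor I would substitute \eqref{eq:WcUpdate}--\eqref{eq:WaUpdate} into the derivatives of $\tfrac12\tilde W_c^{\top}\Upsilon^{-1}\tilde W_c$ and $\tfrac12\tilde W_a^{\top}\tilde W_a$. The critic piece yields $-k_{c_2}\tilde W_c^{\top}\bar C\tilde W_c$ with $\bar C \coloneqq N^{-1}\sum_k \omega_k\omega_k^{\top}/\rho_k^{2}$, and Assumption~\ref{ass:LearnCond} together with \eqref{eq:UpsilonBound} provides $\lambda_{\min}(\bar C)\ge\underline c_1$; the actor piece yields the dominant $-(k_{a_1}+k_{a_2})\|\tilde W_a\|^{2}$. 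The remaining bilinear couplings between $\tilde W_a$, $\tilde W_c$, $\tilde\theta$ and the Bellman residual $\Delta$ are absorbed by Young's inequality, producing the residual constant $\iota$ declared before the theorem statement. Assembling everything, I obtain $\dot V_L \le -\alpha(\|Z\|) + \iota$ for some class-$\mathcal K$ function $\alpha$, provided the sufficient gain conditions \eqref{eq:suffconds} hold; this is the main obstacle, as it requires balancing five different gain parameters so that the quadratic form in $Z$ remains negative definite after the Young-inequality splits.

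Finally, from $\dot V_L \le -\alpha(\|Z\|)+\iota$ and \eqref{eq:VBound}, Theorem~4.18 of \cite{SCC.Khalil2002} yields local uniform ultimate boundedness of $Z$, with the ultimate bound governed by $\iota$ and confined to $\overline{\mathbb{B}}_\chi$ whenever the sublevel-set condition \eqref{eq:gainCond} and the initialization $Z(0)\le \overline v_l^{-1}(\underline v_l(\chi))$ hold; this keeps the trajectory inside the compact set $\Omega$ on which the value-function approximation and all bounds $\overline{(\cdot)}$ are valid. In particular, the BaS coordinate $z(t)$ remains bounded for all $t\ge 0$. Applying Lemma~\ref{lem:cone}, boundedness of $z(t)$ is equivalent to $\beta(x(t))<\infty$, which by Lemma~\ref{lem:safety} is equivalent to $x(t)\in\mathcal S$. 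Hence every initial condition with $x(0)\in\mathcal S_0$ defined in \eqref{eq:S0} produces a closed-loop trajectory satisfying $x(t)\in\mathcal S$ for all $t\ge 0$, establishing conditional control invariance in the sense of Definition~\ref{defn:safety}.
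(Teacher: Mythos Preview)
Your proposal is correct and follows essentially the same Lyapunov route as the paper: differentiate \eqref{eq:Lyap}, use the HJB relation to extract $-s^\top Q s$, exploit the projection property and the cancellation between the observer cross term and the $\phi$-term in \eqref{eq:thetaUpdate}, combine the critic forgetting factor with Assumption~\ref{ass:LearnCond} for the $\tilde W_c$ decay, apply Young's inequality to isolate the residual $\iota$, and close with Theorem~4.18 of \cite{SCC.Khalil2002} and Lemma~\ref{lem:cone}. The only minor refinement in the paper is that the critic coefficient is taken as $\underline{c}=\tfrac{\beta_c}{2\overline{\Upsilon}k_{c_2}}+\tfrac{\underline c_1}{2}$, i.e., the $\dot\Upsilon$ forgetting term contributes to the negative $\|\tilde W_c\|^2$ bound in addition to the extrapolation sum you identified.
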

\begin{proof}The orbital derivative of \eqref{eq:Lyap} along the trajectories of the closed loop system is given by
\begin{multline}\label{eq:VDot1}
    \dot{V}_{L}(Z, t) =  \nabla V(s)(A(s)\theta + F(s) + G(s)\hat{u}(s, \hat{W}_{a}))\\
   + \tilde{z}^\top \dot{\tilde{z}} 
    + \tilde{\theta}^\top \Gamma^{-1} \dot{\tilde{\theta}}
    - \frac{1}{2}\tilde{\theta}^\top \Gamma^{-1}\dot{\Gamma}\Gamma^{-1}\tilde{\theta}\\
    - \tilde{W}_{c}^{\top} \Gamma^{-1}\dot{\hat{W}}_{c} -\frac{1}{2} \tilde{W}_{c}^{\top} \Gamma^{-1}\dot{\Gamma}\Gamma^{-1}\tilde{W}_{c} -\tilde{W}_{a}^{\top}\dot{\hat{W}}_{a}.
\end{multline}
By substituting \eqref{eq:ZtildeDynamics}, \eqref{eq:tildeThetaDyn},  \eqref{eq:WcUpdate}, \eqref{eq:gammaUpdate} and \eqref{eq:WaUpdate}, then by the property of projection operators \cite[Lemma E.1. IV]{SCC.Krstic.Kanellakopoulos.ea1995}, using Cauchy-Schwarz inequality and completion of squares, \eqref{eq:VDot1} can be bounded as
\begin{multline}
 \dot{V}_{L}(Z, t) \leq  -\lambda_{\min}(Q)\|s\|^{2} - \gamma\|\tilde{z}\|^{2} - k_{\theta}\underline{\sigma}_{\theta}\|\tilde{\theta}\|^{2}
- k_{c_{2}}\underline{c}\|\tilde{W}_c\|^2 \\-(k_{a_1}+k_{a_2})\|\tilde{W}_a\|^2
+ \varpi_{1}\|\tilde W_c\|\,\|\tilde\theta\|
+ \varpi_{3}\|\tilde{W}_{a}\|
+ \varpi_{2}\|\tilde{W}_{c}\| \\ 
+ \varpi_{4}\|\tilde{W}_{a}\|
   + \tfrac{1}{4}\overline{G}_{\epsilon}
 + \varpi_{5}\|\tilde{W}_{a}\|^{2}.
\end{multline}
After applying Young's inequality, the bound on the derivative becomes
\begin{multline}\label{eq:Vdot_bound_intermediate}
\dot{V}_{L}(Z,t) \le
- \lambda_{\min}(Q)\|s\|^{2}
- \gamma\|\tilde{z}\|^{2}
- \Big(k_{\theta}\underline{\sigma}_{\theta}-\tfrac{\varpi_{1}^{2}}{2\varepsilon_1}\Big)\|\tilde{\theta}\|^{2} \\
- \Big(k_{c_{2}}\underline{c} - \tfrac{\varepsilon_1}{2}-\tfrac{\varepsilon_2}{2}\Big)\|\tilde{W}_c\|^{2}
- \Big((k_{a_1}+k_{a_2}) - \varpi_{5} - \tfrac{\varepsilon_3}{2}\Big)\|\tilde{W}_a\|^{2} \\
+ \tfrac{\varpi_{2}^{2}}{2\varepsilon_2}
+ \tfrac{(\varpi_{3} + \varpi_{4})^{2}}{2\varepsilon_3}
+ \tfrac{1}{4}\overline{G}_{\epsilon},
\end{multline}
where $\underline{c} \coloneqq\frac{\beta_{c}}{2\overline{\Upsilon} k_{c_{2}}} + \frac{\underline{c}_1}{2}$ is a positive constant and $\varepsilon_1,\varepsilon_2,\varepsilon_3>0$ are positive scalars. Let the effective quadratic coefficients be defined as
$\alpha_s \coloneqq \lambda_{\min}(Q)$,
$\alpha_z \coloneqq k_z$,
$\alpha_\theta \coloneqq k_{\theta}\underline{\sigma}_{\theta}-\frac{\varpi_{1}^{2}}{2\varepsilon_1}$,
$\alpha_{c} \coloneqq k_{c_{2}}\underline{c} - \frac{\varepsilon_1}{2}-\frac{\varepsilon_2}{2}$,
and $\alpha_{a} \coloneqq (k_{a_1}+k_{a_2}) - \varpi_{5} - \frac{\varepsilon_3}{2}$. 
Then \eqref{eq:Vdot_bound_intermediate} is compactly written as
\begin{multline}\label{eq:Vdot_final_bound}
\dot{V}_{L}(Z,t) \le
- \alpha_s\|s\|^2 - \alpha_z\|\tilde{z}\|^2 - \alpha_\theta\|\tilde\theta\|^2
- \alpha_c\|\tilde W_c\|^2\\ - \alpha_a\|\tilde W_a\|^2 + \iota.
\end{multline}
To make the quadratic part strictly negative definite, choose $\varepsilon_1,\varepsilon_2,\varepsilon_3>0$ and gains such that
\begin{subequations}\label{eq:suffconds}
\begin{align}
\alpha_\theta &= (k_{\theta} - 1)\underline{\sigma}_{\theta} + \frac{\beta_{\theta}}{k_{\theta}\overline{\Gamma}} -\frac{\varpi_{1}^{2}}{2\varepsilon_1} > 0, \label{eq:cond_theta}\\
\alpha_c &= k_{c_{2}}\underline{c} - \frac{\varepsilon_1}{2}-\frac{\varepsilon_2}{2} > 0, \label{eq:cond_c}\\
\alpha_a &= (k_{a_1}+k_{a_2}) - \varpi_{5} - \frac{\varepsilon_3}{2} > 0. \label{eq:cond_a}
\end{align}
\end{subequations}

If the gain values $k_\theta,k_{c_{2}},k_{a_1},k_{a_2}$ are chosen so that the right-hand sides above are positive, then $\alpha_s,\alpha_z,\alpha_\theta,\alpha_c,\alpha_a$ are all positive. Under these choices we obtain from \eqref{eq:Vdot_final_bound}
\begin{equation}
    \dot{V}_{L} \le -\upsilon_{l}\left(\|Z\|\right), \quad \forall \|Z\| \geq \upsilon_{l}^{-1}(\iota),
\end{equation}
for all $t \in \mathbb{R}_{\geq 0}$, and for all $Z \in \overline{\mathbb{B}}_{\chi}$,
where $\upsilon_{l}: \mathbb{R}_{\geq 0} \to \mathbb{R}_{\geq 0}$ is a class $\mathcal{K}$ function that satisfies
\begin{equation}
   \upsilon_{l}\left(\|Z\|\right) \leq \frac{1}{2}\min\{\alpha_s,\alpha_z,\alpha_\theta,\alpha_c,\alpha_a\}\|Z\|^{2}.  
\end{equation}
Consequently, if the sufficient conditions in \eqref{eq:suffconds} are met and the bound in \eqref{eq:VBound} is maintained, then Theorem~4.18 of \cite{SCC.Khalil2002} guarantees a local uniform ultimate boundedness property for the closed-loop signal $Z$. In particular, every solution satisfying the initial bound $\|Z(0)\| \le \overline{v}_{l}^{-1}\!\big(v_{l}(\chi)\big)$ obeys $\limsup_{t\to\infty} \|Z(t)\| \le \underline{v}_{l}^{-1}\!\Big(\overline{v}_{l}\!\big(v_{l}^{-1}(\iota)\big)\Big)$. Moreover, the concatenated state trajectories remain confined to the ball $\overline{\mathbb{B}}_{\chi}$ for all $t \in \mathbb{R}_{\ge 0}$. Since the actor weights \TOC{estimation errors $\tilde{W}_{a}$ are uniformly ultimately bounded}, the resulting control policy $\hat{u}$ constitutes an approximation of the optimal policy $u^{*}$. 

Hence, for every initial condition $x(0)\in \mathcal{S}_0$, the true barrier state $z(t)$ remains bounded for all $t\ge 0$. Moreover, since boundedness of $z(t)$ is equivalent to $x(t)\in \mathcal{S}$ by Lemma~\ref{lem:cone}, $x(t)$ remains in the safe set $\mathcal{S}$ for all $t\ge 0$, thereby ensuring safety.
\end{proof}

\begin{theorem}\label{thm:theorem2}
Provided the Assumptions~\ref{ass:locallylipschitzfunctions} - \ref{ass:fullRank} hold, if the unknown parameters and the state estimates are updated using \eqref{eq:ZhatDynamics} and \eqref{eq:thetaUpdate}, and the gains are selected such that
\begin{equation}\label{eq:finalCond}
\frac{\beta_{\theta}}{k_{\theta}\overline{\Gamma}} >
\underline{\sigma}_{\theta},
\end{equation}
is satisfied, then the error systems in \eqref{eq:ZtildeDynamics} and \eqref{eq:tildeThetaDyn} are exponentially stable.
\end{theorem}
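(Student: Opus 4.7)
The plan is to construct a quadratic Lyapunov function tailored to the reduced $(\tilde z,\tilde\theta)$-error subsystem, exploit a cross-term cancellation engineered into the update law, and then bound the orbital derivative by a negative-definite quadratic form using Assumption~\ref{ass:fullRank} together with the gain condition~\eqref{eq:finalCond}. Concretely, I would adopt
\begin{equation*}
    V_{e}(\tilde z,\tilde\theta,t) = \tfrac{1}{2}\tilde z^{\top}\tilde z + \tfrac{1}{2}\tilde\theta^{\top}\Gamma^{-1}(t)\tilde\theta,
\end{equation*}
which, via the uniform bounds in~\eqref{eq:GammaBound}, is sandwiched as $\tfrac{1}{2}\|\tilde z\|^{2}+\tfrac{1}{2\overline{\Gamma}}\|\tilde\theta\|^{2} \le V_{e} \le \tfrac{1}{2}\|\tilde z\|^{2}+\tfrac{1}{2\underline{\Gamma}}\|\tilde\theta\|^{2}$, supplying the quadratic bracketing needed for Khalil's exponential-stability theorem.

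Next, I would differentiate $V_{e}$ along~\eqref{eq:ZtildeDynamics} and~\eqref{eq:tildeThetaDyn}, using $\tfrac{d}{dt}\Gamma^{-1} = -\Gamma^{-1}\dot\Gamma\,\Gamma^{-1}$ to account for the time-varying metric. The key observation is that the scalar cross-term $\tilde z\,\Phi(z+\beta_{0})\nabla h(x)Y(x)\tilde\theta$ produced by $\tilde z^{\top}\dot{\tilde z}$ is annihilated exactly by the symmetric contribution arising from the $\tilde z$-driven summand in~\eqref{eq:tildeThetaDyn}, a cancellation that is the very reason that regressor is embedded in the update law. The projection operator is disposed of by invoking \cite[Lemma~E.1]{SCC.Krstic.Kanellakopoulos.ea1995}: under Assumption~\ref{ass:thetacompactset} the projection can only make $\tilde\theta^{\top}\Gamma^{-1}\dot{\tilde\theta}$ more negative than the unprojected expression, so analyzing the unprojected flow yields a valid upper bound on $\dot V_{e}$.

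I would then substitute $\dot\Gamma$ from~\eqref{eq:gammaUpdate}. In the active branch, $\Gamma^{-1}\dot\Gamma\,\Gamma^{-1} = \beta_{\theta}\Gamma^{-1} - k_{\theta}\Sigma_{\mathcal{Y}}$, and after collecting terms the $\tilde\theta$ contribution reduces to $-\tfrac{k_{\theta}}{2}\tilde\theta^{\top}\Sigma_{\mathcal{Y}}\tilde\theta - \tfrac{\beta_{\theta}}{2}\tilde\theta^{\top}\Gamma^{-1}\tilde\theta$; in the frozen branch ($\dot\Gamma=0$) the metric term disappears but the unreduced $-k_{\theta}\tilde\theta^{\top}\Sigma_{\mathcal{Y}}\tilde\theta$ is retained. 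Assumption~\ref{ass:fullRank} and~\eqref{eq:GammaBound} then yield, in both branches, a bound of the form
\begin{equation*}
    \dot V_{e} \le -\gamma\|\tilde z\|^{2} - \mu\|\tilde\theta\|^{2},
\end{equation*}
where $\mu>0$ is guaranteed by the gain condition~\eqref{eq:finalCond}. Re-expressing $\|\tilde\theta\|^{2}$ in terms of $\tilde\theta^{\top}\Gamma^{-1}\tilde\theta$ via~\eqref{eq:GammaBound} gives $\dot V_{e}\le -\lambda V_{e}$ for some $\lambda>0$, after which \cite[Theorem~4.10]{SCC.Khalil2002} delivers exponential stability of $(\tilde z,\tilde\theta)$.

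The step I expect to be the main obstacle is the simultaneous rigorous treatment of the two piecewise nonlinearities — the branching of $\dot\Gamma$ in~\eqref{eq:gammaUpdate} and the projection on $\hat\theta$ in~\eqref{eq:tildeThetaDyn}: one must verify that $V_{e}$ remains absolutely continuous across the switching surfaces and that the differential inequality $\dot V_{e}\le -\lambda V_{e}$ holds with a common $\lambda$ in every admissible branch. A secondary subtlety is that $\underline{\sigma}_{\theta}>0$ in Assumption~\ref{ass:fullRank} only becomes available once the eigenvalue-maximization rule~\eqref{eq:eigMaxRuleRewritten} has produced a full-rank stack; after that finite instant, however, $\Sigma_{\mathcal{Y}}$ is non-decreasing by construction, so the decay inequality applies uniformly from that time onward.
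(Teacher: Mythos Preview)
Your proposal is correct and mirrors the paper's own proof almost exactly: the same Lyapunov function $\tfrac{1}{2}\tilde z^{\top}\tilde z+\tfrac{1}{2}\tilde\theta^{\top}\Gamma^{-1}\tilde\theta$, the same cross-term cancellation between $\tilde z^{\top}\dot{\tilde z}$ and the $\tilde z$-driven part of the parameter update, the same case split on the two branches of~\eqref{eq:gammaUpdate}, the same appeal to \cite[Lemma~E.1]{SCC.Krstic.Kanellakopoulos.ea1995} to dispose of the projection, and the same final invocation of a Khalil exponential-stability theorem. The paper arrives at the uniform bound $\dot{\tilde V}\le -\gamma\|\tilde z\|^{2}-k_{\theta}\underline\sigma_{\theta}\|\tilde\theta\|^{2}$ by using~\eqref{eq:finalCond} to make the active-branch coefficient $\tfrac{k_{\theta}}{2}\underline\sigma_{\theta}+\tfrac{\beta_{\theta}}{2\overline\Gamma}$ dominate the frozen-branch coefficient $k_{\theta}\underline\sigma_{\theta}$, which is precisely the role you assign to the gain condition; the only cosmetic difference is that the paper cites Theorem~4.8 rather than Theorem~4.10.
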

\begin{proof}
Let a continuously differentiable candidate Lyapunov function, $\tilde{V}: \mathbb{R} \times \mathbb{R}^{p} \times \mathbb{R}_{\geq 0} \to \mathbb{R}$, be defined as,
\begin{equation}
\label{eq:lyapunovfuncFromObserverStability}
    \tilde{V}(\tilde{z}, \tilde{\theta}, t) = \frac{1}{2}\tilde{z}^\top \tilde{z} + \frac{1}{2}\tilde{\theta}^\top\Gamma(t)^{-1}\tilde{\theta}.
\end{equation}
There exist constants $\underline{\tilde{v}},\overline{\tilde{v}}>0$ such that
$\underline{\tilde{v}}\|( \tilde z,\tilde\theta)\|^2 \le \tilde V(\tilde z,\tilde\theta,t) \le \overline{\tilde{v}}\|( \tilde z,\tilde\theta)\|^2
$, $\forall t\ge 0$, which holds locally by positive definiteness of \(\tilde V\) and continuity of \(\Gamma(t)\).
By the property of projection operators in \cite[Lemma E.1. IV]{SCC.Krstic.Kanellakopoulos.ea1995}, the orbital derivative of $\tilde{V}$ along the 
trajectories \eqref{eq:ZtildeDynamics}, \eqref{eq:tildeThetaDyn}, and \eqref{eq:gammaUpdate} is bounded as
\begin{multline}\label{eq:Vdotestimator}
    \dot{\tilde{V}}(\tilde{z}, \tilde{\theta}, t) \leq \tilde{z}^\top \left(\Phi(z+\beta_0)\nabla h(x)Y(x)\tilde{\theta}-\gamma\tilde{z} \right) \\ 
    - k_{\theta}\tilde{\theta}^\top\Sigma_{\mathcal{Y}}\tilde{\theta} - \tilde{\theta}^\top Y(x)^{\top}\nabla  h(x)^{\top}\Phi(z+\beta_0)^{\top} \tilde{z}
    \\-\frac{1}{2}\tilde{\theta}^\top \Gamma^{-1}
    \left(\beta_{\theta}\Gamma -k_\theta \Gamma 
    \Sigma_{\mathcal{Y}} \Gamma\right)
    \Gamma^{-1}\tilde{\theta}
\end{multline}
when $\dot\Gamma = \beta_{\theta}\Gamma -k_\theta \Gamma 
\Sigma_{\mathcal{Y}} \Gamma $ in \eqref{eq:gammaUpdate}. When $\dot\Gamma = 0$ in \eqref{eq:gammaUpdate}, then the Lie derivative of $\tilde{V}$ is bounded as
\begin{multline}\label{eq:Vdotestimator2}
\dot{\tilde{V}}(\tilde{z}, \tilde{\theta}, t) \leq \tilde{z}^\top \left(\Phi(z+\beta_0)\nabla h(x)Y(x)\tilde{\theta}-\gamma\tilde{z} \right) \\ 
 - k_{\theta}\tilde{\theta}^\top\Sigma_{\mathcal{Y}}\tilde{\theta} - \tilde{\theta}^\top Y(x)^{\top}\nabla  h(x)^{\top}\Phi(z+\beta_0)^{\top} \tilde{z}.
\end{multline}
Using Assumption~\ref{ass:fullRank}, applying the triangle inequality, and provided the gain condition in \eqref{eq:finalCond} is satisfied, the orbital derivative \eqref{eq:Vdotestimator} can be bounded, \TOC{for any update of $\dot\Gamma$ in \eqref{eq:gammaUpdate}, as}
\begin{equation}\label{eq:tildeVdot}
\tilde{V}(\tilde{z}, \tilde{\theta}, t) 
\leq 
-\,\gamma\,\|\tilde z\|^{2}
- k_{\theta}\underline\sigma_{\theta}\,
\|\tilde\theta\|^{2}.
\end{equation}
Hence, by Theorem 4.8 of \cite{SCC.Khalil2002}, the error dynamics \eqref{eq:ZtildeDynamics} and \eqref{eq:tildeThetaDyn} are exponentially stable. \TOC{Consequently, under the ICL-based update law \eqref{eq:thetaUpdate}, the parameter estimates $\hat{\theta}(t)$ converge exponentially to the true parameters $\theta$, i.e.,
$\lim_{t \to \infty} \tilde{\theta}(t) = \lim_{t \to \infty} \big(\hat{\theta}(t) - \theta\big) = 0$}.
\end{proof}
\section{Simulation Study}
This section will examine the effectiveness of the control policy proposed in \eqref{eq:approxControl} \B{using} an obstacle avoidance problem. The system dynamics for this study is a nonlinear control-affine system of the form in \eqref{eq:dynamics} with state $x = [x_{1}, x_{2}]^\top$, where
\begin{equation}\label{eq:simDyn2} Y(x) = \begin{bmatrix} x_1 & x_2 & 0 & 0 &\\ 0 & 0& x_1 +x_2 & x_1^2x_2 \end{bmatrix}, \end{equation} $g(x) = [0, \; \cos(2x_{1}) + 2]^\top$, $f(x) = [\ 0 \quad  0\ ]^\top$, and $\theta= \begin{bmatrix}
    \theta_1 & \theta_2 & \theta_3 & \theta_4
\end{bmatrix}^\top = \begin{bmatrix}
    -1 & -1 & -0.5 & -0.5
\end{bmatrix}^\top$.

For the simulation, the safe set is defined by \eqref{eq:safeSet1} where the constraint is given by $h(x)= (x_1-1)^2+ (x_2-2)^2 -0.5^2$. The objective of the control policy is to ensure that the agent avoids the obstacle and therefore remains within the safe set specified by Definition~\ref{defn:safety}.  The penalty or cost for states and control effort in \eqref{eq:costFunctional} are respectively chosen as $Q=I_3$ and $R=1$. The estimates for $ \theta$ denoted by $\hat \theta= \begin{bmatrix}
\hat \theta_1 & \hat \theta_2 & \hat \theta_3 & \hat \theta_4
\end{bmatrix}^\top$. \TOC{The} initial condition for the system in \eqref{eq:simDyn2} is $ x(0) = [2.5, \; 4]^\top $. The initial values \B{selected} for the estimates, weights, and gains are $\hat{z}(0)=0$, $\hat\theta(0)=0_{4\times1}$, $\hat{W_c}=0.5{1}_{6 \times 1}$, $ \hat{W_a}=0.5{1}_{6 \times 1}$, $\Gamma(0)= 10 I_{4}$, \TOC{and} $\Upsilon(0)= 0.01 I_{6}$ . The barrier function in \eqref{eq:barrierFunction} \TOC{is} selected as $\beta(x)=B(h(x))=  \frac{K}{h(x)},\quad \forall \ x \in \ \mathbb{R}^2$ so that $\Phi(\beta)=-\frac{K}{\beta^2}$. The observer uses $\gamma = 3$, and the barrier gain is selected as $K = 0.01$. 

The control and learning gains used in the simulation are as follows. For the ADP framework, the gains are set to $\nu = 2$, $k_{c1} = 1$, $k_{c2} = 1$, $k_{a1} = 2$, $k_{a2} = 1$, and $\beta_c = 0.1$, with bounds $\underline{\Upsilon} = 0$ and $\overline{\Upsilon} = 1000$. The ICL gains are chosen as $k_{\theta} = 50$, $\kappa = 1$, and $\beta_{\theta} = 1$. The simulation uses 100 fixed Bellman error extrapolation points $s_k$ placed within a $4 \times 4$ square centered at the origin of the $s=[x_1 \quad x_2 \quad z]^\top$ coordinate system. The basis for the value function is \TOC{selected} as $\sigma(s)=\begin{bmatrix}s_1^2 & s_2^2 & s_3^2 & s_1s_2 & s_2s_3 & s_3s_1 \end{bmatrix}^\top$. The \TOC{s}imulation study will examine the effectiveness of the control policies for the developed BaS-RL-ICL framework, \B{the CBF-RL-ICL framework from \cite{SCC.Cohen.Serlin.ea2023}}, and \TOC{the RL-ICL} without any safety constraints, denoted by Cases 1, 2, and 3, respectively.
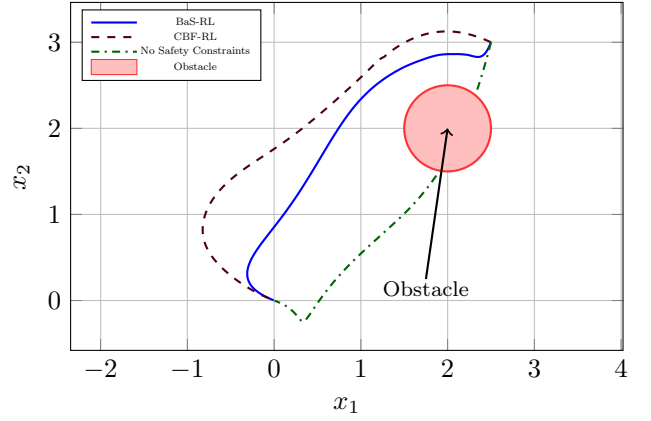
\begin{figure}
    \centering
    \begin{tikzpicture}
  \begin{axis}[
    width=\linewidth,
    height=0.7\linewidth,
    xlabel={$x_1$},
    ylabel={$x_2$},
    grid=both,
    axis equal,
    legend style={
      at={(0.02,0.98)},
      anchor=north west,
      nodes={scale=0.5, transform shape}
    }
  ]

    \draw[
      fill=red!25,
      draw=red!80,
      thick,
    ] (axis cs:2,2) circle [radius=0.5];

    \addlegendimage{area legend, fill=red!25, draw=red!80}
    \addlegendentry{\small Obstacle}

    \draw[->, thick]
      (axis cs:1.75, 0.25)
      node[above, yshift=-10pt]{\small Obstacle}
      -- (axis cs:2, 2);

    \addplot+[
      thick,
      blue,
      solid,
      mark=none,
    ] table[
      x index=1,
      y index=2,
      col sep=space,
    ]{Data/Case-01/x.dat};
    \addlegendentry{\small BaS-RL}

    \addplot+[
      thick,
      purple!40!black,
      dashed,
      mark=none,
    ] table[
      x index=1,
      y index=2,
      col sep=space,
    ]{Data/Case-02/x.dat};
    \addlegendentry{\small CBF-RL}

    \addplot+[
      thick,
      green!40!black,
      dash dot,
      mark=none,
    ] table[
      x index=1,
      y index=2,
      col sep=space,
    ]{Data/Case-03/x.dat};
    \addlegendentry{\small No Safety Constraints}

  \end{axis}
\end{tikzpicture}
    \caption{Phase-space trajectories of the system}  
    \label{fig:trajectory}
\end{figure}

\begin{figure}
    \centering
    \begin{tikzpicture}
  \begin{axis}[
    width=\linewidth,
    height=0.45\linewidth,
    xlabel={$t$ (in sec)},
    ylabel={$\tilde\theta$},
    grid=both,
    legend style={nodes={scale=0.7, transform shape}},
    xmin = 0,
    enlarge y limits=0.01,
    enlarge x limits=0,
  ]


  \addplot+[thick, red!70!black, 
      mark=triangle,
      mark size=1.5pt,
      mark repeat=60,
      mark options={solid}]
    table[x index=0, y index=1]{Data/Case-01/theta_hat.dat};

  \addplot+[thick, blue, 
      mark= diamond,
      mark size=1.5pt,
      mark repeat=60,
      mark options={solid}]
    table[x index=0, y index=2]{Data/Case-01/theta_hat.dat};

  \addplot+[thick, purple!80!black,
      mark=*,
      mark size=1.5pt,
      mark repeat=60,
      mark options={solid}]
    table[x index=0, y index=3]{Data/Case-01/theta_hat.dat};

  \addplot+[thick, green!50!black, 
      mark=square,
      mark size=1.5pt,
      mark repeat=60,
      mark options={solid}]
    table[x index=0, y index=4]{Data/Case-01/theta_hat.dat};

  \addlegendimage{thick, red!70!black}
  \addlegendentry{$\tilde\theta_1$}

  \addlegendimage{thick, blue}
  \addlegendentry{$\tilde\theta_2$}

  \addlegendimage{thick, purple!80!black}
  \addlegendentry{$\tilde\theta_3$}

  \addlegendimage{thick, green!50!black}
  \addlegendentry{$\tilde\theta_4$}

  \end{axis}
\end{tikzpicture}
    \caption{{Evolution of the parameter estimate error with time. }}  
   \label{fig:theta_hat}
\end{figure}
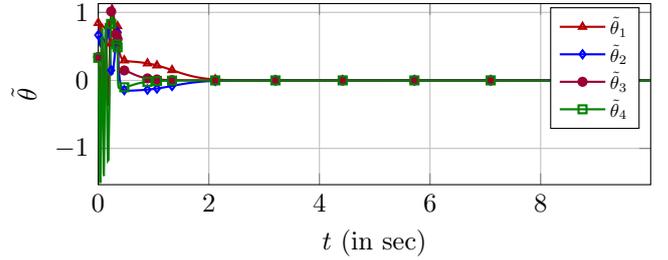

\subsection{Discussion}
The effectiveness of the framework is compared with the CBF-RL technique described in \cite{SCC.Cohen.Serlin.ea2023}, \B{Figure \ref{fig:trajectory} shows the effectiveness of BaS-augmentation-based control policy in avoiding the obstacle. Also, the CBF-based approach maintained larger safety margins in comparison to the BaS-based control policy}. Figure \ref{fig:theta_hat} demonstrates that the system parameter error converged exponentially to zero, consistent with \B{Theorem~\ref{thm:theorem2}}. Together, these results highlight the improved efficiency, stability, and learning performance of the BaS framework in comparison \TOC{with} existing CBF-based techniques.

\section{Conclusion}
In this paper, we introduced a BaS-based safe control framework for nonlinear control-affine systems with unknown parameters. The results show that the method preserves safety without prior parameter knowledge and drives the estimates towards the true values while maintaining stable system behavior. The framework has some limitations: the policy is near-optimal rather than asymptotically optimal, and finite excitation is required for full convergence. Furthermore, augmenting the system dynamics with BaS dynamics increases the computational complexity of the optimal control problem due to the increased state dimension and model nonlinearity. Despite these limitations, as shown in the simulation results, the developed ICL-BaS-RL framework guarantees near-optimal safety and stability. Future work will extend the framework to more complex environments, including settings with time-varying obstacles and dynamic disturbances, and investigate conditions that provide global safety guarantees under less restrictive assumptions.

{\small
\bibliography{scc,sccmaster,scctemp,ifacconf}                                                    
}


\end{document}